\title{Log Minimal Model Program for the Kontsevich Space of Stable Maps $\overline{\mathcal M}_{0,0}(\mathbb P^{3}, 3)$}
\author{Dawei Chen}
\date{}
\newtheorem{theorem}{Theorem}[section]
\newtheorem{corollary}[theorem]{Corollary}
\newtheorem{remark}[theorem]{Remark}
\begin{document}
\bibliographystyle{plain}
\maketitle

\begin{abstract}
We run the log minimal model program for the Kontsevich space of stable maps $\overline{\mathcal M}_{0,0}(\mathbb P^{3}, 3)$ and give modular interpretations to all the intermediate spaces appearing in the process. In particular, we show that one component of the Hilbert scheme $\mathcal H_{3,0,3}$ is the flip of  $\overline{\mathcal M}_{0,0}(\mathbb P^{3}, 3)$ over the Chow variety. Finally as an easy corollary we obtain that $\overline{\mathcal M}_{0,0}(\mathbb P^{3}, 3)$ is a Mori dream space.
\end{abstract}

\tableofcontents

\section{Introduction}
Recently in higher dimensional geometry, with the success of the minimal model program, people became interested in carrying out this program
explicitly for various moduli spaces. In \cite{Ha}, Hassett initiated a framework to understand the canonical model
Proj \big($\bigoplus_{m\geq 0} H^{0}(mK_{\overline{\mathcal M}_{g}})$\big) of the moduli space $\overline{\mathcal M}_{g}$ of stable genus $g$ curves, by using
the log canonical models Proj \big($\bigoplus_{m\geq 0} H^{0}(m(K_{\overline{\mathcal M}_{g}}+\alpha\delta))$\big), where $\delta$ is the total boundary divisor corresponding to singular curves. The eventual goal is to decrease $\alpha$ from 1 to 0 and describe the log canonical models appearing in this process. Surprisingly, it seems that all these models should be geometrically meaningful spaces, by which we mean for instance, moduli spaces parameterizing curves with suitable singularities, or GIT quotients of some Hilbert schemes and Chow varieties of curves. This phenomenon has been already verified for beginning cases, c.f. \cite{Ha}, \cite{HH} and \cite{HL}. But the whole picture is still far from complete. \\

Similarly, we can consider running the log minimal model program on the Kontsevich space of stable maps $\overline{\mathcal M}_{0,0}(\mathbb P^{d}, d)$ instead of $\overline{\mathcal M}_{g}$. $\overline{\mathcal M}_{0,0}(\mathbb P^{d}, d)$ is a useful compactification for the scheme parameterizing smooth degree $d$ rational curves in $\mathbb P^{d}$. One important reason we choose to study $\overline{\mathcal M}_{0,0}(\mathbb P^{d}, d)$ is because its effective cone is completely known, due to the result in \cite[Thm 1.5]{CHS2}. When $d$ is 2, it is well-known that $\overline{\mathcal M}_{0,0}(\mathbb P^{2}, 2)$ is isomorphic to the blow up of the Hilbert scheme of plane conics along the locus of  double lines, i.e., $\overline{\mathcal M}_{0,0}(\mathbb P^{2}, 2) \cong \mbox{Bl}_{V_{2}}\mathbb P^{5}$, where $V_{2}$ is the image of the degree 2 Veronese embedding $\mathbb P^{2} \hookrightarrow \mathbb P^{5}$. So the first nontrivial case is $d = 3$.  \\

In this article, we run the log minimal model program for $\overline{\mathcal M}_{0,0}(\mathbb P^{3}, 3)$ and give geometric interpretations to all the intermediate models. Before we state the main result, let us recall some basic divisors on $\overline{\mathcal M}_{0,0}(\mathbb P^{3}, 3)$. One can refer to \cite{CHS1}, \cite{CHS2} and \cite{P} for a general description of the divisor theory on $\overline{\mathcal M}_{0,0}(\mathbb P^{d}, d)$. \\

$H$ is the class of the divisor of maps whose images intersect a fixed line in $\mathbb P^{3}$; $\Delta = \Delta_{1,2}$ is the class of the boundary divisor consisting of maps with reducible domains; $T$ is the class of the tangency divisor whose complement consists of maps $[C, \mu]$ such that $\mu^{-1}(\Pi)$ is reduced, where $\Pi$ is a fixed plane in $\mathbb P^{3}$; $D_{deg}$ is the class of the degenerate divisor
parameterizing a stable map whose set theoretic image does not span $\mathbb P^{3}$. Finally, fix a point $p$ and a plane $\Lambda$ containing $p$ in $\mathbb P^{3}$. Define a divisor $F$ as the closure of $\{ [C,\mu] \in \mathcal M_{0,0}(\mathbb P^{3}, 3)\ |\ \exists\ p_{1}, p_{2} \in \mu(C)\cap \Lambda\ \mbox{such that $p, p_{1}$ and $p_{2}$ are collinear} \}$. We call $(p\in \Lambda)$ the defining flag for $F$. 

\begin{theorem}
\label{div} 
\ \\
(i). Pic $(\overline{\mathcal M}_{0,0}(\mathbb P^{3}, 3)) \otimes \mathbb Q$ is generated by $H$ and $\Delta$; \\
(ii). The effective cone of $\overline{\mathcal M}_{0,0}(\mathbb P^{3}, 3)$ is generated by $\Delta$ and $D_{deg}$; \\
(iii). The nef cone of $\overline{\mathcal M}_{0,0}(\mathbb P^{3}, 3)$ is generated by $H$ and $T$; \\
(iv). $T = \frac{2}{3}(H + \Delta)$; \\
(v). $D_{deg} = \frac{2}{3}(H-\frac{1}{2}\Delta)$; \\
(vi). $F = \frac{5}{3}(H - \frac{1}{5}\Delta).$
\end{theorem}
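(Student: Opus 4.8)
The plan is to treat parts (i) and (ii) as inputs from the literature and to concentrate the real work on (iii)--(vi). For (i), the rational Picard group of a Kontsevich space $\overline{\mathcal M}_{0,0}(\mathbb P^{r}, d)$ is freely generated by $H$ together with the boundary classes $\Delta_{a,b}$ with $a+b=d$ and $a\le b$; for $d=3$ the only such splitting is $(1,2)$, so $\mathrm{Pic}\otimes\mathbb Q$ has rank two with basis $\{H,\Delta\}$ (I would cite Pandharipande \cite{P}). For (ii), I would quote \cite[Thm 1.5]{CHS2} directly, since the introduction already flags that the effective cone is known. The payoff is that every divisor class is a $\mathbb Q$-combination $aH+b\Delta$, so each of (iv), (v), (vi) reduces to pinning down two rational coefficients.

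To find those coefficients I would use the test-curve (dual pairing) method. First I would fix two one-parameter families $R_1, R_2$ of stable maps spanning a two-dimensional space of curve classes, and compute $H\cdot R_i$ and $\Delta\cdot R_i$ for each. A convenient first family $R_1$ lives inside the boundary: fix a line $L$ and a gluing point $q\in L$, let a conic $Q_t$ vary in a general pencil in a fixed plane through $q$, and glue $L\cup Q_t$ along $q$; here $H\cdot R_1$ is read off from the degree of the surface swept by the conics, while $\Delta\cdot R_1$ comes from the singular members of the pencil, which forces care about deeper boundary strata. A convenient second family $R_2$ is an honest pencil of twisted cubics (a Lefschetz-type pencil on a cubic scroll), for which $H\cdot R_2$ and the number $\Delta\cdot R_2$ of reducible members are classical. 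Then for each of $T$, $D_{deg}$, $F$ I would compute the two intersection numbers geometrically and solve the resulting $2\times 2$ linear systems: for $T$ this counts the maps in the family tangent to a fixed plane $\Pi$ (equivalently the ramification of $\mu^{-1}(\Pi)$); for $D_{deg}$ it counts the members whose image degenerates into a plane; for $F$ it counts members admitting two points of $\mu(C)\cap\Lambda$ collinear with $p$ for the chosen flag $(p\in\Lambda)$. As an internal check the three answers should satisfy $F = D_{deg}+H$, a relation one also expects to read off directly from the residual-intersection geometry of the flag condition.

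For (iii) I would argue nefness and extremality separately. Both $H$ and $T$ are semiample: $H$ is pulled back along the natural cycle/Chow-type morphism, and $T$ is base-point free because for a general plane $\Pi$ the linear system cutting out the tangency condition separates the relevant maps, so both classes are nef and $\langle H,T\rangle$ is contained in the nef cone. For the reverse inclusion, since $\mathrm{Pic}\otimes\mathbb Q$ is two-dimensional the nef cone is a single planar wedge, so it suffices to exhibit a curve class $A_1$ with $H\cdot A_1=0$ and $T\cdot A_1>0$, and a curve class $A_2$ with $T\cdot A_2=0$ and $H\cdot A_2>0$; these are exactly the curves contracted by the two boundary morphisms (the fibers of the map associated to $H$, and the $T$-trivial curves that get flipped in the log MMP that is the subject of the paper). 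Dually, one can instead read the nef cone off as the dual of the Mori cone once the two extremal curve classes are identified, consistently with the known effective cone from (ii).

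The main obstacle is the geometric intersection bookkeeping in the second step, and the single hardest computation is the one for $F$: the flag/collinearity condition is a codimension-one incidence that is genuinely more delicate than the tangency or degeneracy conditions, and one must verify both that the chosen test families meet $F$ transversally and that no contribution is hidden in a boundary stratum where the collinearity degenerates. Getting the boundary multiplicities right for $R_1$, where the pencil of conics acquires singular members and the stable map drops into a deeper stratum, is the other place where the computation can go wrong, so I would cross-check every coefficient against the relation $F = D_{deg}+H$ and against the requirement that $T$, $D_{deg}$, and $F$ land in the expected chambers relative to the cones from (ii) and (iii).
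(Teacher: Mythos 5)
Your overall strategy (reduce everything to a $2\times2$ linear system against two test curves, after quoting the Picard group and the effective cone) is the same as the paper's, but the paper is far more economical: it cites (i)--(v) wholesale from \cite{CHS1}, \cite{CHS2} and \cite{P} and only proves (vi), using the two families $B_1$ (a pencil of lines attached to a fixed plane conic at the base point of the pencil, with $H\cdot B_1=1$, $\Delta\cdot B_1=-1$, $F\cdot B_1=2$) and $B_2$ (a pencil of plane conics attached to a fixed line at a base point, with $H\cdot B_2=1$, $\Delta\cdot B_2=2$, $F\cdot B_2=1$). Your nef-cone argument for (iii) (semiampleness of $H$ and $T$ plus one $H$-trivial and one $T$-trivial effective curve) is sound and is essentially how \cite{CHS1} proceeds, and your consistency check $F=D_{deg}+H$ is correct and is a nice addition the paper does not use.

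There is, however, a concrete problem with your choice of test families. Your $R_1$ (conic pencil glued to a line) is the paper's $B_2$, with class $(H\cdot R_1,\Delta\cdot R_1)=(1,2)$; note that $D_{deg}\cdot R_1=\frac{2}{3}\bigl(1-\frac{1}{2}\cdot 2\bigr)=0$, which reflects the fact that no member of this family is planar. But a ``Lefschetz-type'' pencil of twisted cubics whose singular members are transverse unions of a conic and a line likewise never meets $D_{deg}$, so it also pairs to zero with $D_{deg}$ and hence is \emph{proportional} to $R_1$ in $N_1$ (for instance, the pencil of type $(1,2)$ curves on a smooth quadric through four base points has class $(2,4)$). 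With two proportional test curves your $2\times2$ systems are singular: you can determine $T$, $D_{deg}$, $F$ only modulo $D_{deg}$, and $D_{deg}$ itself not at all. You need a second family that genuinely meets $D_{deg}$ (or, equivalently, on which $H$ and $\Delta$ restrict independently of $(1,2)$); the paper's $B_1$ does the job precisely because the varying line sweeps a plane and exactly one member of the pencil lands in the plane of the conic, giving $\Delta\cdot B_1=-1$ and $D_{deg}\cdot B_1=1$. Separately, the one computation that is the actual content of the theorem --- the values $F\cdot B_i$ for the collinearity divisor, which you correctly identify as the delicate step --- is left entirely as a plan, so as written the proposal does not yet establish (vi).
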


\begin{proof}
All of the above have been proved in \cite{CHS1}, \cite{CHS2} and \cite{P} in a more general setting except (vi).
Write $F$ as the linear combination $aH + b\Delta$. Take two test curves in $\overline{\mathcal M}_{0,0}(\mathbb P^{3}, 3)$: \\
\  $B_{1}$: a pencil of lines attached to a plane conic at the base point. \\
\  $B_{2}$: a pencil of plane conics attached to a line at one of the four base points. \\

The following intersection numbers are easy to verify: 
$$H\ldotp B_{1} = 1,\ \Delta \ldotp B_{1} = -1,\ F\ldotp B_{1} = 2, $$
$$H\ldotp B_{2} = 1, \ \Delta \ldotp B_{2} = 2, \ F\ldotp B_{2} = 1. $$

Hence, we get $a = \frac{5}{3}$ and $b = -\frac{1}{3}$.
\end{proof}

The picture below shows the decomposition of the effective cone of $\overline{\mathcal M}_{0,0}(\mathbb P^{3}, 3)$ by these divisors, which was first told to the author by Izzet Coskun. 

\begin{figure}[H]
    \centering
    \psfrag{T}{$T$}
    \psfrag{H}{$H$}
    \psfrag{E}{$\Delta$}
    \psfrag{F}{$F$}
    \psfrag{D}{$D_{deg}$}
    \includegraphics[scale=0.6]{Eff.eps}
\end{figure}

To run the log minimal model program for $\overline{\mathcal M}_{0,0}(\mathbb P^{3}, 3)$, since the effective cone is 2-dimensional, we only need to adjust the parameter $\alpha$ in Proj \big($\bigoplus_{m\geq 0} H^{0}(m(H+\alpha\Delta))$\big) from $-\frac{1}{2}$ to $\infty$ to cover the whole effective cone. Denote $\overline{M}(\alpha)$ as the model Proj \big($\bigoplus_{m\geq 0} H^{0}(m(H+\alpha\Delta))$\big). In order to give a geometric interpretation for $\overline{M}(\alpha)$, we also need a few other compactifications besides $\overline{\mathcal M}_{0,0}(\mathbb P^{3}, 3)$ for the space of twisted cubics. \\

Consider the Hilbert scheme $\mathcal H_{3,0,3}$ of degree 3 and arithmetic genus 0 curves in $\mathbb P^{3}$. It consists of two smooth irreducible components $\mathcal H$ and $\mathcal H'$, of dimension 12 and 15 respectively, c.f. the main theorem in \cite{PS}. Only the component $\mathcal H$ contains the locus of twisted cubics. $\mathcal H'$ contains the points corresponding to plane cubic curves union a point in $\mathbb P^{3}$. $\mathcal H$ admits a natural map $g$ to the Chow variety by forgetting the scheme structure of a curve but only remembering its cycle class. Denote the image of $g$ by $\mathcal{C}$. Let $N\subset \mathcal H$ be the locus of curves possessing at least one nonreduced
primary component. $N$ is irreducible and a general element in $N$ corresponds to a double line of genus $-1$ with a line meeting it and lying in its projective tangent space at the
point of intersection, c.f. \cite[p. 39]{H} and \cite[4.4]{Lee}. It is clear that Exc($g$) = $g^{-1}(N)$ and dim $N=9$. \\

Similarly, there is also a natural morphism $f$ from $\overline{\mathcal M}_{0,0}(\mathbb P^{3}, 3)$ to $\mathcal{C}$ by forgetting the maps and only remembering the image cycles. Define the locus of the multi-image maps $M\subset \overline{\mathcal M}_{0,0}(\mathbb P^{3}, 3)$ as \{[$C, \mu] \in \overline{\mathcal M}_{0,0}(\mathbb P^{3}, 3)\ |\ \exists$ a component $C_{1}$ of $C$ such that the map degree of $\mu$ restricted to $C_{1}$ is greater than 1\}. Here the map degree stands for the degree of the covering, which is different from the degree of the image cycle in $\mathbb P^{3}$. 
$M$ breaks into two components $M_{1,2}$ and $M_{3}$. A general point in $M_{1,2}$ corresponds to a map whose domain consists of a nodal union of two $\mathbb P^{1}$'s. One component maps with degree 1 and the other one maps with degree 2. A point in $M_{3}$ is a map whose domain is a single $\mathbb P^{1}$ which maps with degree 3. Apparently $M$ is contained in the degenerate divisor $D_{deg}$. We also have Exc($f$) = $f^{-1}(M)$, dim $M_{1,2} = 9$ and dim $M_{3} =8$. \\

Note that there is a birational map $\phi: \overline{\mathcal M}_{0,0}(\mathbb P^{3}, 3) \dashrightarrow \mathcal H$ which is a canonical isomorphism away from Exc($f$) and Exc($g$). In particular, $\phi$ is an isomorphism in codimension two, so Pic($\overline{\mathcal M}_{0,0}(\mathbb P^{3}, 3))\otimes\mathbb Q$ is isomorphic to Pic($\mathcal H)\otimes\mathbb Q$. We will keep the same notation for divisors on both spaces. However, one should be aware that $D_{deg}$ on $\mathcal H$ only consists of curves whose primary components as schemes are planar. For instance, a triple line whose ideal is defined by the square of the ideal of a line is not contained in $D_{deg}$.  \\

A twisted cubic $C$ can be parameterized by a unique net of quadrics $H^{0}(\mathcal I_{C}(2))$. So the space of twisted cubics can be realized as an open subscheme in the Grassmannian $\mathbb G(2, 9)$. Denote its closure by $\mathcal H(2)$. Moreover, one can check that $H^{0}(\mathcal I_{C}(2))$ is always 3-dimensional for any point
$[C] \in \mathcal H$, by using \cite[Lemma 2]{PS}. Hence, we get a morphism $h: \mathcal H\rightarrow \mathcal H(2)$. The space $\mathcal H(2)$ and the morphism $h$ have been investigated intensively
in \cite{EPS}, so we are able to insert them into our program. \\

The last space we need here is less well-known. It is the space of 2-stable maps $\overline{\mathcal M}_{0,0}(\mathbb P^{3}, 3, 2)$, which is originally raised up in \cite{MM} and also studied later in \cite{Pa}. Roughly speaking, the difference between $\overline{\mathcal M}_{0,0}(\mathbb P^{3}, 3)$ and $\overline{\mathcal M}_{0,0}(\mathbb P^{3}, 3, 2)$ is to replace a tail of degree 1 by a base point
at the attaching point of the tail. There is a contraction map $\theta: \overline{\mathcal M}_{0,0}(\mathbb P^{3}, 3)\rightarrow \overline{\mathcal M}_{0,0}(\mathbb P^{3}, 3, 2)$ contracting the boundary
$\Delta$. \\

After these preparations, now we state the main result.
\begin{theorem}
\label{main} 
\ \\
(i). $\overline{M}(\alpha) \cong \overline{\mathcal M}_{0,0}(\mathbb P^{3}, 3)$ for $\alpha \in (0,1)$; \\
(ii). $\overline{M}(0) \cong \mathcal{C}$ and $f: \overline{\mathcal M}_{0,0}(\mathbb P^{3}, 3) \rightarrow \mathcal{C}$ is a small contraction contracting the locus of the multi-image maps $M$; \\
(iii). $g: \mathcal H \rightarrow \mathcal{C}$ is the flip of $f$ associated to the divisor $H+\alpha\Delta$ and $\overline{M}(\alpha)\cong\mathcal H$ for $\alpha \in (-\frac{1}{5}, 0)$;\\
(iv). $h: \mathcal H \rightarrow \mathcal H(2)$ is a divisorial contraction contracting the locus of the degenerate curves $D_{deg}$ and $\overline{M}(\alpha)\cong \mathcal H(2)$ for
$\alpha \in (-\frac{1}{2}, -\frac{1}{5}]$; \\
(v). $\theta: \overline{\mathcal M}_{0,0}(\mathbb P^{3}, 3)\rightarrow \overline{\mathcal M}_{0,0}(\mathbb P^{3}, 3, 2)$ is a divisorial contraction contracting the boundary $\Delta$
and $ \overline{M}(\alpha)\cong\overline{\mathcal M}_{0,0}(\mathbb P^{3}, 3, 2)$ for $\alpha \in [1, \infty)$.
\end{theorem}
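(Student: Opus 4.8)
The plan is to run the program in order of decreasing $\alpha$ and to exploit throughout that the Picard rank is $2$, so that every nef or effective cone in sight is spanned by two explicit rays coming from Theorem~\ref{div}. The walls are exactly the values of $\alpha$ at which $H+\alpha\Delta$ becomes proportional to a distinguished class: $\alpha=1$ gives $\tfrac32T$, $\alpha=0$ gives $H$, $\alpha=-\tfrac15$ gives $\tfrac35F$, and $\alpha=-\tfrac12$ gives $\tfrac32D_{deg}$. The open chambers $(0,1)$, $(-\tfrac15,0)$, $(-\tfrac12,-\tfrac15)$ and the unbounded chamber $[1,\infty)$ will each be matched with an ample model, while the four walls will be matched with contractions.

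In the interior chambers I would argue that $H+\alpha\Delta$ is ample on the conjectured model and invoke the definition of the ample model. For $\alpha\in(0,1)$, part~(iii) of Theorem~\ref{div} says the nef cone of $\overline{\mathcal M}_{0,0}(\mathbb P^3,3)$ is $\langle H,T\rangle=\langle H+0\cdot\Delta,\ \tfrac32T\rangle$, so $H+\alpha\Delta$ is ample and $\overline M(\alpha)\cong\overline{\mathcal M}_{0,0}(\mathbb P^3,3)$; this is (i). For $\alpha\in(-\tfrac15,0)$ I would first show the nef cone of $\mathcal H$ is $\langle H,F\rangle$: since $H=g^\ast A$ and $F=h^\ast A'$ for ample classes $A$ on $\mathcal C$ and $A'$ on $\mathcal H(2)$, each is nef and, being the pullback under a nontrivial contraction, non-ample, hence an extremal ray of the two-dimensional nef cone; as $\phi$ identifies $\mathrm{Pic}\otimes\mathbb Q$, these are the two boundary rays. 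Then $H+\alpha\Delta$ is ample on $\mathcal H$ for $\alpha\in(-\tfrac15,0)$, which gives the second half of (iii).

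The walls $\alpha=1$ and $\alpha=-\tfrac15$ are divisorial contractions, handled by one pushforward computation. The inputs are $T=\theta^\ast A''$ (with $A''$ ample on $\overline{\mathcal M}_{0,0}(\mathbb P^3,3,2)$) and $F=h^\ast A'$, each confirmed by checking with test curves that the class is nef and vanishes on exactly the curves contracted by $\theta$, resp.\ $h$; that these contractions take the divisors $\Delta$ and $D_{deg}$ to lower-dimensional loci (hence are genuinely divisorial) comes from the modular descriptions of $\overline{\mathcal M}_{0,0}(\mathbb P^3,3,2)$ and $\mathcal H(2)$. Using $H+\alpha\Delta=\tfrac32T+(\alpha-1)\Delta$ for $\alpha\ge1$ and $H+\alpha\Delta=(1+2\alpha)F+(-1-5\alpha)D_{deg}$ for $\alpha\in(-\tfrac12,-\tfrac15]$, each class is a nonnegative combination of the pullback class and the exceptional divisor. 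Since the exceptional divisor is effective and the target is normal, $\theta_\ast\mathcal O(m\Delta)=\mathcal O$ and $h_\ast\mathcal O(mD_{deg})=\mathcal O$, so by the projection formula the section ring collapses to that of the pullback class and $\overline M(\alpha)$ equals the target of the contraction, giving (v) and (iv).

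The wall $\alpha=0$ and the flip are the heart of the argument. For (ii), I would show $H=f^\ast A$ because the incidence-with-a-line condition depends only on the image cycle and so descends along $f$; then $\overline M(0)=\mathrm{Proj}\bigoplus_m H^0(mH)=\mathcal C$, and smallness of $f$ follows from $\dim M=9$, which is less than $\dim\overline{\mathcal M}_{0,0}(\mathbb P^3,3)-1=11$. For the flip in (iii) I would combine three facts: $\phi$ is an isomorphism in codimension one; $f$ and $g$ are both small with common target $\mathcal C$, and $H$ pulls back from $\mathcal C$ on both sides; and the sign of $H+\alpha\Delta$ relative to the contracted curves reverses across the wall. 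The main obstacle is this last point: one must produce test curves inside the $f$-fibers (in the multiple-cover locus $M$) and inside the $g$-fibers (in the locus $N$ of curves with a nonreduced component) and compute their intersections with $\Delta$, establishing $\Delta\cdot C>0$ on the $M$-side and $\Delta\cdot C<0$ on the $N$-side. Since $H$ is trivial on both families, this makes $H+\alpha\Delta$ both $f$-negative and $g$-positive for $\alpha<0$, which is precisely the statement that $g$ is the $(H+\alpha\Delta)$-flip of $f$. Building these test curves demands a concrete hold on the geometry of the genus $-1$ double line with a tangent line spanning $N$ and of the multiple covers spanning $M$, and this geometric bookkeeping — rather than any formal input from the minimal model program — is where the real work lies.
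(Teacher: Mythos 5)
Your overall strategy coincides with the paper's: walk through the chambers of the two-dimensional effective cone, identify each wall with the divisor classes of Theorem \ref{div}, and realize each wall-crossing as a contraction or flip. Your treatment of the two divisorial walls is a genuine (and clean) variant: the paper removes $\Delta$ resp.\ $D_{deg}$ from the linear series by first computing stable base loci (Theorem \ref{BL}), whereas you write $H+\alpha\Delta$ as a nonnegative combination of the pullback class and the exceptional divisor and collapse the section ring by $\theta_*\mathcal O(m\Delta)=\mathcal O$, $h_*\mathcal O(mD_{deg})=\mathcal O$; both routes work, and your decompositions $H+\alpha\Delta=\tfrac32T+(\alpha-1)\Delta$ and $H+\alpha\Delta=(1+2\alpha)F+(-1-5\alpha)D_{deg}$ are correct. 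Note, however, that the identities $T=\theta^*A''$ and $F=h^*A'$ are themselves substantive: the paper proves $F\propto h^*\mathcal O_{\mathcal H(2)}(1)$ by exhibiting a curve class $R$ (a pencil of nodal plane cubics through a common node) killed by both and invoking Picard rank $2$, and proves that the curves with $T\cdot R=0$ are exactly the moving degree-$1$ tails; your "confirmed by test curves" gloss is where that work sits.

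The genuine gap is in (iii), and you have in effect flagged it yourself. First, a single test curve per exceptional fiber does not by itself give $g$-ampleness of $K$ (or $f$-ampleness of $-K$): you need positivity on \emph{every} curve contracted by $g$, so you must either show that all such curves are numerically proportional (relative Picard rank $1$) or control the full cone of curves of each fiber. The paper does the latter explicitly, and this is the bulk of its proof: it identifies $f^{-1}([L_1]+2[L_2])\cong\mathrm{Sym}^2\mathbb P^1\cong\mathbb P^2$ and $f^{-1}(3[L_3])\cong\overline{\mathcal M}_{0,0}(\mathbb P^1,3)$ on the Kontsevich side, and on the Hilbert side works out the four types of nonreduced structures to get $g^{-1}([L_1]+2[L_2])\cong\mathbb P^2$ and $g^{-1}(3[L_3])\cong\mathbb P(\mathcal O\oplus\mathcal O\oplus\mathcal O(-3))$ with a section contracted, then checks $D_{deg}$ is ample on each. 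Your deferral of "the geometric bookkeeping" leaves precisely this unproved. Second, and somewhat ironically, the nef-cone computation you carry out earlier (nef cone of $\mathcal H$ spanned by $H=g^*(\text{ample})$ and $F=h^*(\text{ample})$) already makes the $N$-side test curves unnecessary: for $\alpha\in(-\tfrac15,0)$ the class $H+\alpha\Delta$ lies in the interior of $\langle H,F\rangle$, hence is ample on $\mathcal H$ and a fortiori $g$-ample; dually, on the Kontsevich side $H+\alpha\Delta$ lies outside the nef cone across the wall $H$, so it is negative on the ray contracted by $f$ (since $T\cdot R=\tfrac23\Delta\cdot R>0$ forces $\Delta\cdot R>0$ there). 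The paper records exactly this shortcut in a remark after its proof; connecting your two pieces in this way would close the gap without the explicit fiber analysis.
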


The above result can be best seen from the following diagram: 

\begin{figure}[H]
    \centering
    \psfrag{M2}{$\overline{\mathcal M}_{0,0}(\mathbb P^{3}, 3, 2)$}
    \psfrag{M3}{$\overline{\mathcal M}_{0,0}(\mathbb P^{3}, 3)$}
    \psfrag{C}{$\mathcal C$}
    \psfrag{H}{$\mathcal H$}
    \psfrag{H2}{$\mathcal H(2)$}
    \psfrag{theta}{$\theta$}
    \psfrag{f}{$f$}
    \psfrag{g}{$g$}
    \psfrag{h}{$h$}
    \psfrag{phi}{$\phi$}
    \psfrag{flip}{$\mbox{flip}$}
    \includegraphics[scale=0.5]{Model.eps}
\end{figure}

This paper is organized as follows. In Section 2 we study the base locus of an effective divisor on $\overline{\mathcal M}_{0,0}(\mathbb P^{3}, 3)$. The main theorem is proved in
Section 3. Some corollaries and further questions are discussed in the last section. Throughout, we work over an algebraic closed field of characteristic 0. All the divisors are $\mathbb Q$-divisors and 
the equivalence relation means numerical equivalence. \\

{\bf Acknowledgements.} This work was originally motivated by conversations with Izzet Coskun and my advisor Joe Harris. I am grateful to both of them. 
Without their help, this paper could not be finished so smoothly. Especially I would like to thank Izzet Coskun who told me about the chamber decomposition of the effective cone and 
generously let me continue working on the rest of the project. I also want to thank David Smyth for numerous helpful comments and suggestions.

\section{Base Loci of Effective Divisors on $\overline{\mathcal M}_{0,0}(\mathbb P^{3}, 3)$}
In order to study the log canonical model associated to an effective divisor $K$ which is proportional to $H+\alpha\Delta$ on $\overline{\mathcal M}_{0,0}(\mathbb P^{3}, 3)$, it would be useful to
figure out the base locus B($K$) first. Here the base locus for a $\mathbb Q$-divisor means the stable base locus, in the sense of \cite[Def 2.1.20]{L}. 
In the introduction section, we already saw a chamber decomposition for the effective cone of $\overline{\mathcal M}_{0,0}(\mathbb P^{3}, 3)$.
Actually B($K$) only depends on which chamber $K$ lies in.

\begin{theorem}
\label{BL} 
\ \\
(i). $K$ is base point free if it lies in the chamber bounded by $H$ and $T$; \\
(ii). B($K$) is the boundary $\Delta$ if $K$ lies in the chamber bounded by $T$ and $\Delta$; \\
(iii). B($K$) is the multi-image locus $M$ if $K$ lies in the chamber bounded by $H$ and $F$; \\
(iv). B($K$) is the locus of the degenerate maps $D_{deg}$ if $K$ lies in the chamber bounded by $F$ and $D_{deg}$.
\end{theorem}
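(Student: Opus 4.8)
The plan is to treat each chamber by proving two inclusions: an upper bound $\mathrm{B}(K)\subseteq Z$, obtained by producing effective divisors in $|mK|$ that avoid a given point off $Z$, and a lower bound $Z\subseteq \mathrm{B}(K)$, obtained from the elementary principle that $K\cdot B<0$ forces $B\subseteq \mathrm{B}(K)$, so that a family of such curves sweeping out $Z$ gives $Z\subseteq\mathrm{B}(K)$. The whole argument rests on two semiampleness facts from the introduction: $H=f^{*}\mathcal O_{\mathcal C}(1)$ is globally generated, and $T$ is proportional to $H+\Delta=\theta^{*}(\text{ample})$, so both $H$ and $T$ are semiample with empty stable base locus.

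Part (i) is then immediate: by Theorem \ref{div}(iii) the chamber bounded by $H$ and $T$ is the nef cone, so any $K$ there is a nonnegative combination of the semiample classes $H$ and $T$, hence itself semiample and $\mathrm{B}(K)=\emptyset$. For (ii), writing $K=aH+b\Delta$ with $b\ge a\ge 0$ I rewrite $K=\tfrac{3a}{2}T+(b-a)\Delta$ as a nonnegative combination of the semiample class $T$ and the effective boundary $\Delta$; multiplying a section of $|m\tfrac{3a}{2}T|$ missing a point $x\notin\Delta$ by the fixed section cutting out $(b-a)m\Delta$ gives a section of $|mK|$ nonzero at $x$, so $\mathrm{B}(K)\subseteq\Delta$. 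For the reverse inclusion I use the test curve $B_{1}$, a pencil of lines attached to a fixed conic: it lies in $\Delta$, sweeps out $\Delta$ as the conic and node vary, and satisfies $K\cdot B_{1}=a-b\le 0$, strictly negative in the interior of the chamber, whence $\Delta\subseteq\mathrm{B}(K)$.

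For the lower bounds in (iii) and (iv) the key observation is that the multi-image locus $M=\mathrm{Exc}(f)$ is swept out by curves $B$ contracted by $f$, for which $H\cdot B=0$; since one checks $\Delta\cdot B>0$ on such curves, $K\cdot B=b\,(\Delta\cdot B)<0$ for every $K=aH+b\Delta$ with $b<0$, covering both chambers and yielding $M\subseteq\mathrm{B}(K)$ throughout. To upgrade this to $D_{deg}\subseteq\mathrm{B}(K)$ in the chamber bounded by $F$ and $D_{deg}$, I will construct a covering family of $D_{deg}$ by curves $B$ of planar cubics with $\Delta\cdot B=5\,H\cdot B$, equivalently $F\cdot B=0$; for these $K\cdot B=(a+5b)\,H\cdot B$, which is negative exactly when the slope $b/a<-\tfrac15$, i.e. precisely past the wall $F$. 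This simultaneously explains why $D_{deg}$ enters the base locus only in the final chamber and furnishes the inclusion $D_{deg}\subseteq\mathrm{B}(K)$ there, while the same curves have $K\cdot B>0$ in the chamber bounded by $H$ and $F$, consistent with $\mathrm{B}(K)=M$ on that side.

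It remains to prove the matching upper bounds, and this is where the real work lies. For (iv) I would first show $\mathrm{B}(F)\subseteq D_{deg}$: if $[C,\mu]\notin D_{deg}$ the image spans $\mathbb P^{3}$, so $\mu(C)\cap\Lambda$ spans a generic plane and no two of its points are collinear with a generic vertex $p$, whence $[C,\mu]$ avoids the divisor $F$ attached to a generic flag; combined with $\mathrm{B}(D_{deg})=D_{deg}$ (the extreme ray of the effective cone is rigid) and the decomposition $K=\lambda F+\mu D_{deg}$ this gives $\mathrm{B}(K)\subseteq D_{deg}$. The genuinely hard step is the upper bound $\mathrm{B}(K)\subseteq M$ for $K$ in the open chamber bounded by $H$ and $F$. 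Here a point $[C,\mu]\in D_{deg}\setminus M$ typically lies on every $F$-divisor, so products of generic $H$- and $F$-sections all vanish at it and one cannot separate it by pure products; the strict positivity of the $H$-coefficient must be exploited to manufacture genuinely new sections of $|m(cH+dF)|$ that miss $[C,\mu]$. I expect this to be the main obstacle, and I would attack it by passing to the net of quadrics $H^{0}(\mathcal I_{C}(2))$ via $h\colon\mathcal H\to\mathcal H(2)$ together with the identification $\phi$ in codimension $\le 2$, using the extra quadric sections available on a non-multiply-covered degenerate cubic to build effective divisors in $|mK|$ avoiding it, and thereby showing that only the multiply-covered maps in $M$ survive in the base locus.
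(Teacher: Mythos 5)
Your overall strategy (lower bounds from sweeping families of curves with $K\cdot B<0$, upper bounds by exhibiting members of $|mK|$ missing a given point) is the paper's strategy, and your treatment of (i), (ii) and (iv) matches it: the paper uses the same test curve $B_{1}$ for (ii), curves in the fibers of $f$ for the lower bound in (iii) (it writes down explicit pencils $E_{1,2}\subset M_{1,2}$ and $E_{3}\subset M_{3}$ with $H\cdot E=0$ and $\Delta\cdot E>0$, which you should do too rather than just asserting $\Delta\cdot B>0$), and exactly your pencil of plane nodal cubics with $F\cdot R=0$ for (iv). But there is a genuine gap at the step you yourself flag as the crux, the upper bound $\mathrm{B}(K)\subseteq M$ in (iii): you do not prove it, you only promise an attack via $h\colon\mathcal H\to\mathcal H(2)$, and the premise motivating that detour is false. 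A point $[C,\mu]\in D_{deg}\setminus M$ does \emph{not} lie on every $F$-divisor. If the image of $\mu$ spans only a plane $\Pi'$ but $\mu$ is birational onto its image, then for a general flag $(p\in\Lambda)$ the cycle $\mu_{*}[C]\cap\Lambda$ consists of three \emph{distinct} points on the line $\Lambda\cap\Pi'$, and projection from a general $p\in\Lambda$ (which does not lie on that line) keeps them distinct; since this degree-$3$ cycle, and hence the discriminant of its projection from $p$, varies continuously near $[C,\mu]$, the point is not even in the closure defining $F$. What forces a point into \emph{every} $F$-divisor is precisely the multi-image condition: there the cycle $\mu_{*}[C]\cap\Lambda$ is non-reduced for every $\Lambda$, so two of the $p_{i}$ coincide automatically.

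Consequently the direct argument you dismiss is the one the paper uses: for any $[C,\mu]\notin M$ one chooses a line defining $H$ and a flag defining $F$ so that $[C,\mu]$ lies on neither, and since $K$ in this chamber is a positive combination of $H$ and $F$, a product of such sections gives a member of $|mK|$ avoiding $[C,\mu]$; hence $\mathrm{B}(K)\subseteq M$. Your proposed workaround through nets of quadrics is therefore unnecessary, but as written your proof of (iii) is incomplete, since the containment $\mathrm{B}(K)\subseteq M$ is never established. The same observation also cleans up your (iv): you already argue $\mathrm{B}(F)\subseteq D_{deg}$ for non-degenerate maps, and the corrected analysis above gives the sharper statement $\mathrm{B}(F)=M\subseteq D_{deg}$ used by the paper.
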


The picture below provides a quick view of the above result. 

\begin{figure}[H]
    \centering
    \psfrag{T}{$T$}
    \psfrag{H}{$H$}
    \psfrag{F}{$F$}
    \psfrag{D}{$D_{deg}$}
    \psfrag{E}{$\Delta$}
    \psfrag{ample}{$B=\emptyset$}
    \psfrag{BL=E}{$B=\Delta$}
    \psfrag{BL=M}{$B=M$}
    \psfrag{BL=D}{$B=D_{deg}$}
    \includegraphics[scale=0.9]{BL.eps}
\end{figure}

\begin{proof}
(i) is trivial since the nef cone of $\overline{\mathcal M}_{0,0}(\mathbb P^{3}, 3)$ is generated by $H$ and $T$. $H$ and $T$ are base point free by their definitions. \\

For (ii), take the test curve $B_{1}$ we used in the proof of Theorem \ref{div}. $T\ldotp B_{1} = \frac{2}{3}(H + \Delta)\ldotp B_{1} = 0$ and $\Delta \ldotp B_{1} =  -1$ so $K\ldotp B_{1} < 0$. Such $B_{1}$'s cover an open set of $\Delta$. Hence, $\Delta \subset$ B($K$). After removing the base locus $\Delta$ from $K$, only the part $T$ is left and $T$ is base point free. \\

For (iii), if $[C,\mu]$ is not a multi-image map, we can always exhibit a line defining $H$ and a flag defining $F$ such that
$[C,\mu]$ is neither contained in $H$ nor in $F$. Hence, B($K)\subset M$. On the other hand, take two general quadrics $Q_{1}, Q_{2}$ on a plane $\Lambda$
and a point $p\in \Lambda$. For a line $L_{\lambda}$ in the pencil of lines passing through $p$ on $\Lambda$, define a degree 2 map from
$L_{\lambda}$ to a line $L = [X, Y, 0, 0]\subset \mathbb P^{3}$ by $L_{\lambda}\rightarrow [Q_{1}|_{L_{\lambda}}, Q_{2}|_{L_{\lambda}}, 0, 0]$. If $L_{\lambda}$ is spanned by $p$ and a base point $b \in Q_{1}\cap Q_{2}$, we need to blow up $b$ and the domain curve breaks to a nodal union of two lines each of which maps isomorphically to $L$. We attach another line to this pencil at $p$ and map it isomorphically to a line $L'$ in $\mathbb P^{3}$ which
passes through $[Q_{1}(p), Q_{2}(p), 0, 0]$. Then we obtain a 1-dimensional family $E_{1,2}$ in $M_{1,2}$. $E_{1,2}\ldotp H = 0$ and $E_{1,2}\ldotp \Delta = 3$, so $E_{1,2}\ldotp (aF+bH) < 0$ for any
$a>0$. Moreover, such $E_{1,2}$'s sweep out an open set of $M_{1,2}$. Therefore $M_{1,2}$ must be contained in B($K$). Furthermore, take two general cubics $S_{1}, S_{2}$ on $\Lambda$. For a line $L_{\lambda}$ in the pencil of lines passing through $p$ on $\Lambda$, define a degree 3 map from
$L_{\lambda}$ to $L = [X, Y, 0, 0]\subset \mathbb P^{3}$ by $L_{\lambda}\rightarrow [S_{1}|_{L_{\lambda}}, S_{2}|_{L_{\lambda}}, 0, 0]$. If $L_{\lambda}$ is spanned by $p$ and a base point $b \in S_{1}\cap S_{2}$, we blow up $b$ and the domain curve breaks to a nodal union of two lines. One of them maps 2:1 to $L$ and the other maps 1:1 to $L$. Now we get a 1-dimensional family $E_{3}\subset M_{3}$.
$E_{3}\ldotp H = 0$ and $E_{3}\ldotp \Delta = 8$, so $E_{3}\ldotp (aF+bH) < 0$ for any $a>0$. Moreover, such $E_{3}$'s cover an open set of $M_{3}$. Therefore $M_{3}$ must be contained in B($K$). \\

For (iv), take a pencil $R$ of plane rational nodal cubics all passing through a fixed node $p$. Obviously we have $R\ldotp H = 1$. Besides $p$, there are five other base points $b_{1},\ldots, b_{5}$ of this pencil.
If $C$ is a degree $3$ reducible plane curve singular at $p$ which also passes through $b_{1},\ldots, b_{5}$, then $C$ must be the union of a conic and a line. The conic goes through $p$ and four other base points while the line goes through $p$ and the remaining base point. Hence, there are five reducible curves in this pencil, which implies that  $R\ldotp \Delta = 5$.
Another way to calculate the intersection is by blowing up $p$ and $b_{1}, \ldots, b_{5}$ from the plane to get a surface $S$ as a rational fibration over $\mathbb P^{1}$. $\chi(S) = \chi(\mathbb P^{2}) + 6 = 9$, but $\chi(S)$ also equals $\chi(\mathbb P^{1})\cdot \chi(\mathbb P^{1}) +$ the number of reducible fibers. Hence, we get 5 reducible curves in $R$ again.
Therefore, $R \ldotp (aD_{deg}+bF) = \frac{1}{3}(2a+5b)(R\ldotp H) - \frac{1}{3}(a+b)(R\ldotp \Delta) = -a < 0$ for any $a > 0$, which implies $D_{deg}\subset$ B($K$). Now remove $D_{deg}$ from $K$ and only
the component $F$ is left. We already verified in (iii) that B($F$) = $M$ and $M\subset D_{deg}$.
\end{proof}

\section{Log Canonical Models for $\overline{\mathcal M}_{0,0}(\mathbb P^{3}, 3)$}
This section is devoted entirely to prove Theorem \ref{main} step by step. Recall that an effective divisor $K$ on $\overline{\mathcal M}_{0,0}(\mathbb P^{3}, 3)$
can be written as $H + \alpha \Delta$ up to scalar. We adjust the parameter $\alpha$ to consider the model Proj \big($\bigoplus_{m\geq 0} H^{0}(m(H+\alpha\Delta))$\big) associated to $K$.

\begin{proof}
(i) is trivial since the ample cone of $\overline{\mathcal M}_{0,0}(\mathbb P^{3}, 3)$ is the interior of the nef cone generated by $H$ and $T$. \\

For (ii), the divisor $H$ is nef and base point free. Obviously it contracts the multi-image locus $M$, so the resulting space is $\mathcal C$.
Moreover, the two components $M_{1,2}$ and $M_{3}$ have dimension 9 and 8 respectively. Hence, it is a small contraction. \\

(iii) is the central part of the whole proof. The main point here is to check that $K = H + \alpha \Delta$ is $g$-ample and $-K$ is $f$-ample for $\alpha \in (-\frac{1}{5},0)$.  \\

Firstly, we verify that $-K$ is $f$-ample. Pick a cycle class
$[C]$ in the image of $\mbox{Exc}(f)$ in $\mathcal C$. $[C]$ can be either
$[L_{1}]+2[L_{2}]$ or $3[L_{3}]$, where $L_{1}, L_{2}, L_{3}$ are
lines in $\mathbb P^{3}$ and $L_{1}, L_{2}$ intersect at one node
$p$. \\

If $[C] = [L_{1}] + 2[L_{2}]$, then the fiber $f^{-1}([C])$
is contained in $M_{1,2}$ and is isomorphic to the locus of the
evaluation divisor $\mathcal L = ev^{*}(p)$ inside
$\overline{\mathcal M}_{0,1}(L_{2}, 2)$. The marked point
maps to $p$ for the purpose of attaching a copy of $L_{1}$. For a
general point corresponding to a map $(\mathbb P^{1}, \mu)$ in
$\mathcal L$, $\mu$ branches at two distinct points $p_{1}, p_{2}$
away from $p$ on $L_{2}$. $\mu^{-1}(p)$ consists of two points as
the candidates for the marked point. However, the involution of $\mu$
switches these two points. So in this case the branch points
$p_{1}, p_{2}$ determine uniquely an element in $\mathcal L$.
When $p_{1}$ and $p_{2}$ meet away from $p$, the domain of $\mu$
breaks to a nodal union of two $\mathbb P^{1}$'s. Both of them
map isomorphically to $L_{2}$. The marked point can be contained in either
one of the two domain components, but the involution of $\mu$ switches them.
The most special case is when $p_{1}, p_{2}$ and $p$ all coincide.
Then the domain of $\mu$ splits to a length-3 chain of $\mathbb
P^{1}$'s. The middle $\mathbb P^{1}$ contains the marked point and
gets contracted to $p$. The other two rational tails
both map isomorphically to $L_{2}$. Therefore, $\mathcal L$ is
isomorphic to $\mbox{Sym}^{2}\mathbb P^{1}\cong \mathbb P^{2}$. Now we
take the family $E_{1,2}$ used in the proof of Theorem \ref{BL}
(iii). $E_{1,2}$ is an effective curve class in $f^{-1}([C])$ since the base
point of the pencil maps to a fixed point in $\mathbb P^{3}$. There are 4
reducible curves in this pencil, so $E_{1,2}\ldotp K = \alpha
E_{1,2}\ldotp \Delta = 4\alpha < 0$. \\

Similarly, if $[C] =
3[L_{3}]$, the fiber $f^{-1}([C])$ is isomorphic to
$\overline{\mathcal M}_{0,0}(\mathbb P^{1}, 3)$. Pic
($\overline{\mathcal M}_{0,0}(\mathbb P^{1}, 3))\otimes\mathbb Q$ is generated by
the boundary divisor $\Delta_{1,2}$. So $\overline{\mbox{NE}}(f^{-1}([C]))$ is
1-dimensional. We pick the effective curve class $E_{3}$ in Theorem \ref{BL}
(iii).
$E_{3}\ldotp H = 0$ and $E_{3}\ldotp \Delta = 8$, so $E_{3}\ldotp K = 8\alpha < 0$. Overall, we proved that $-K$ is $f$-ample. \\

Next, we prove that $K$ is $g$-ample. If $[C] = [L_{1}] + 2[L_{2}]$ is in the image of $\mbox{Exc}(g)$ in $\mathcal C$, $g^{-1}([C])$ consists of four different types of curves up to projective equivalence in $\mathcal H$, c.f. \cite[p. 39-41]{H} and \cite[4.4]{Lee}: \\
\  (I). A double line $D$ of genus $-1$ supported on $L_{2}$ meeting $L_{1}$ at one point $p$ such that $L_{1} \subset \mathbb T_{p}D$. \\
\  (II). A planar double line $D$ of genus 0 supported on $L_{2}$ meeting $L_{1}$ at one point $p$ and $L_{1}$ not lying on the plane containing $D$. \\
\  (III). A planar double line $D$ of genus 0 supported on $L_{2}$ meeting $L_{1}$ at one point $p$ with an embedded point
$q\neq p$ somewhere on $L_{2}$ and $L_{1}$ lying on the plane containing $D$. \\
\  (IV). A planar double line $D$ of genus 0 supported on $L_{2}$ meeting $L_{1}$ at one point $p$ with an embedded point
at $p$ and $L_{1}$ lying on the plane containing $D$. \\

Note that (I) can specialize to either (II) or (III), and all of them can specialize to (IV).  Now we do an explicit calculation to show that $g^{-1}([C])\cong \mathbb P^{2}$. \\

Suppose the coordinate of $\mathbb P^{3}$ is given by $[X, Y, Z, W]$.
$L_{1}$ is defined by $Y=Z=0$, $L_{2}$ is defined by $X=Y=0$, and $p = L_{1}\cap L_{2} = [0,0,0,1]$. The double line $D$ in (I) is defined by the ideal
$\mathcal I_{D} = ((X,Y)^{2}, XG-YF)$, where $F = aZ + bW$ and $G=cZ + dW$ are two linear forms on $L_{2}$, c.f. \cite[Prop 1.4]{N}. Since the tangent direction $[F,G,0,0]$ at $p$
is contained in the plane $Y=0$ spanned by $L_{1}, L_{2}$, we get $G(p) = 0$, i.e., $d=0$ and $G = c Z$. When $b, c$ are both nonzero, it corresponds to
(I). If $c=0$ but $b\neq 0$, $\mathcal I_{D\cup L_{1}} = ((X,Y)^{2}, Y(aZ+bW))\cap (Y,Z) = (X, Y, aZ + bW)^{2}\cap (Y, X^{2})\cap (Y, Z)$, so an embedded point appears at the point $[0,0,-b,a]$ on $L_{2}$, and $L_{1}$ is on the plane where the planar double line $D$ lies. Hence, we get case (III). If $c\neq 0$ but $b=0$,
$\mathcal I_{D\cup L_{1}} = ((X,Y)^{2}, Z(aY-cX))\cap (Y,Z) = (Y,Z)\cap (aY-cX, Y^{2})$, that is, case (II). Finally, if $b=c=0$, then
$\mathcal I_{D\cup L_{1}} = ((X,Y)^{2}, YZ)\cap (Y,Z) = (Y,Z)\cap (Y,X^{2})\cap (X,Y,Z)^{2}$, which is case (IV).
It is clear that $g^{-1}([C])$ is isomorphic to $\mathbb P^{2}$ defined by the parameters $[a,b,c]$ up to scalar.
The locus (I) is isomorphic to $\mathbb P^{2}\backslash \mbox{(II)}\cup\mbox{(III)}\cup\mbox{(IV)}$. (II) and (III) are both isomorphic to $\mathbb A^{1}$ and (IV) is their intersection point. We can pick a general line class $R$ in $g^{-1}([C])$. $R \ldotp H = 0$ and $R \ldotp D_{deg} = 1$ since (I) and (II) are not contained in $D_{deg}$ but (III) is. Hence, $R\ldotp K > 0$ for $K$ as a positive linear combination of $H$ and $D_{deg}$. \\

 For the other case, if $[C] = 3[L_{3}]$, $g^{-1}([C])$ in $\mathcal H$ is the locus of triple lines supported on $L_{3}$. Again, there are three different types: \\
\  (V). A triple line $J$ supported on $L_{3}$ lying on a quadric cone. \\
\  (VI). A planar triple line $J$ supported on $L_{3}$ with an embedded point on it. \\
\  (VII). A triple line $J$ whose ideal is given by the square of $\mathcal I_{L_{3}}$. \\

We can show that $g^{-1}([C])$ is isomorphic to a projective bundle $\mathbb P(\mathcal O \oplus \mathcal O \oplus \mathcal O(-3))$ contracting a section over $\mathbb P^{1}$, and $D_{deg}$
restricted to $g^{-1}([C])$ is ample. \\

For $J$ in case (V), suppose that $L_{3}$ is defined by $X=Y=0$ and the multiplicity two structure $\mathcal I_{2}$ associated to $J$ is $(\beta X - \alpha Y, (X,Y)^{2})$, where $\beta X - \alpha Y$ is the tangent plane of a quadric cone containing $J$ at $L_{3}$. If $\beta \neq 0$, let $t = \alpha/\beta$. Then $\mathcal I_{2}$ can be rewritten as $(X-tY, Y^{2})$.  As in \cite[Prop 2.1]{N}, $I_{J}$ has the form $((X-tY)^{2}, (X-tY)Y, Y^{3}, (aW+bZ)(X-tY) - c Y^{2})$ which is parameterized by
$(t; [a,b,c])$. Similarly, if $\alpha \neq 0$, let $s = \beta/\alpha$. Then $I_{J} = ((sX-Y)^{2}, (sX-Y)X, X^{3}, (aW+bZ)(sX-Y) - cX^{2})$ which is parameterized by $(s; [a,b,c])$. When we change the coordinate, it is easy to check that $(t; [a,b,c])\rightarrow (1/t ; [a,b, c / t^{3}])$. So after gluing together,
we simply get a $\mathbb P^{2}$ bundle $\mathbb PV^{*} \cong \mathbb P(\mathcal O\oplus \mathcal O\oplus \mathcal O(-3))$ over $\mathbb P^{1}$. The locus $S$ where $c=0$ is isomorphic to $\mathbb P^{1}\times \mathbb P^{1}$ in $\mathbb PV^{*}$ which corresponds to case (VI), and $S$ is also the restriction of $D_{deg}$. However, if $a=b=0$ but $c\neq 0$, then $\mathcal I_{J}$ is always equal to $(X,Y)^{2}$ independent of $[\alpha, \beta]$. It implies that $g^{-1}([C])$ is isomorphic to $\mathbb PV^{*}$ contracting the section $\Gamma$ cut out by $a=b=0$.
$\Gamma$ is one extremal ray of $\overline{\mbox{NE}}(\mathbb PV^{*})$. $S\ldotp \Gamma = 0$ since $S$ is physically away from $\Gamma$. So $S$ is nef on $\mathbb PV^{*}$. After contracting $\Gamma$, $S$ becomes ample on $g^{-1}([C])$. Again, $R\ldotp K > 0$ for any effective curve class $R\in \overline{\mbox{NE}}(g^{-1}([C]))$, since $K$ is a positive linear combination of $H$ and $D_{deg}$. \\

For (iv), since we already flipped $\overline{\mathcal M}_{0,0}(\mathbb P^{3}, 3)$ to $\mathcal H$ for $\alpha \in (-\frac{1}{5}, 0)$, to resume the log canonical model program we need to
start from $\mathcal H$. If $\alpha = -\frac{1}{5}$, $K$ is proportional to $F$. In the proof of Theorem \ref{BL} (iv), we know that $F\ldotp R$ = 0, where $R$ is a pencil of plane rational nodal curves on $\Lambda$ possessing a common node $p$. The intersection number $F\ldotp R$ = 0 also holds on $\mathcal H$, although a spatial embedded point occurs at $p$ for each curve in $R$.
So after contracting the class $R$, the resulting space should only remember the information of the flag $(p\in \Lambda)$. Actually this is the space of nets of quadrics $\mathcal H(2)$ we defined in the introduction section. Recall that there is a natural morphism $h$ from the open set of $\mathcal H$ corresponding to twisted cubics to the Grassmannian $\mathbb G(2,9)$
by $h([C]) = [H^{0}(\mathcal I_{C}(2))]\in \mathbb PH^{0}(\mathcal O_{\mathbb P^{3}}(2)).$ $\mathcal H(2)$ is just the closure of the image of $h$ in $\mathbb G(2,9)$. This map can extend to $\mathcal H$, since for any $[C]\in \mathcal H$, $h^{0}(\mathcal I_{C}(2)) = 3$. Moreover, $[C], [C']\in D_{deg}$ share the same flag $(p\in \Lambda)$ if and only if $H^{0}(\mathcal I_{C}(2)) = H^{0}(\mathcal I_{C'}(2))$, c.f. \cite[Lemma 2]{PS}. Hence,
$h^{*}(\mathcal O_{\mathcal H(2)}(1))\ldotp R = 0$. $h^{*}(\mathcal O_{\mathcal H(2)}(1))$ must be proportional to $F$, since $F\ldotp R=0$ and the Picard number of $\mathcal H$ is 2. It also implies that $F$ is nef on $\mathcal H$. So the nef cone of $\mathcal H$ is generated by $H$ and $F$. If $\alpha < -\frac{1}{5}$, $K$ is proportional to $F + aD_{deg}$ for some $a>0$. Since $D_{deg}$ is in the base locus of $K$, after we remove it, $K$ becomes proportional to $F$. \\

For (v), when $\alpha = 1$, $K$ is proportional to the tangency divisor $T$. We already know that $T$ is nef on $\overline{\mathcal M}_{0,0}(\mathbb P^{3}, 3)$. If an extremal ray $R$ satisfies the relation $R\ldotp T = 0$, $R$ must be contained in the boundary $\Delta$. Moreover, if an image point of a map in $R$ is the image of a node, then it cannot move. Otherwise it must meet the plane $\Pi$ defining $T$ and $R\ldotp T$ would not be zero. Similarly, the image of a domain component which maps with degree greater than 1 cannot move. Otherwise 
the locus of branch points would meet $\Pi$ at some point. Hence, only the images of components of map degree 1 can vary. On the other hand, we can choose a general plane defining $T$ so that this plane is away from the images of the finitely many attaching points of those components which map with degree 1, and also away from the branch points on the fixed images of components which map with degree greater than 1. Then clearly we have $T\ldotp R = 0$. Therefore, this divisorial contraction associated to $T$ contracts all degree 1 tails but only remembers their attaching points. The resulting space has already appeared in \cite{MM} and \cite{Pa}, the space of 2-stable maps $\overline{\mathcal M}_{0,0}(\mathbb P^{3}, 3, 2)$, c.f. Remark \ref{MM}. If $\alpha >1$, $K$ is proportional to $T + b\Delta$ for some $b>0$. Since $\Delta$ is in the base locus of $K$, after removing it, $K$ becomes proportional to $T$.
\end{proof}

\begin{remark}
Since $H\ldotp R = 0$ for any effective curve class $R\in \overline{\mbox{NE}}(g^{-1}([C]))$, where $[C]\in \mathcal C$ is contained in the image of Exc($g$), and $H, F$ generate the nef cone of $\mathcal H$, we know that $F$ restricted to $g^{-1}([C])$
must be ample. Now the ampleness of $D_{deg}$ on $g^{-1}([C])$ follows immediately since $F$ is a positive linear combination of $H$ and $D_{deg}$. This provides a quick proof
for the $g$-ampleness of $K$ in (iii). Similarly, one can verify that $T$ is ample when restricted to $f^{-1}([C])$, which also implies that $-K$ is $f$-ample.
\end{remark}

\begin{remark}
Take a plane $\Lambda$ and three general points $p_{1}, p_{2}, p_{3}$ on it. Define a divisor $G$ on $\mathcal H$ as the closure of twisted cubics whose intersection points
with $\Lambda$ are contained in a conic that also contains $p_{1}, p_{2}$ and $p_{3}$. The attention to this divisor was pointed out to the author by Joe Harris. Let us determine the class of $G$. Using the test families $B_{1}$ and $B_{2}$ in
the proof of Theorem \ref{div}, we get the following intersection numers:
$$H\ldotp B_{1} = 1,\ \Delta \ldotp B_{1} = -1,\ G\ldotp B_{1} = 2, $$
$$H\ldotp B_{2} = 1, \ \Delta \ldotp B_{2} = 2, \ G\ldotp B_{2} = 1, $$
which tells that $G = \frac{5}{3}(H-\frac{1}{5}\Delta) = F$. We know that $F$ is proportional to $h^{*}(\mathcal O_{\mathcal H(2)}(1))$ in the proof of Theorem \ref{main} $(iv)$. Actually it is easy to check
$G =  h^{*}(\mathcal O_{\mathcal H(2)}(1))$ directly. $\mathcal O(1)$ of the Grassmannian $\mathbb G(2,9)$ is defined by the Schubert cycle $\sigma_{1}$. When it pulls back to
$\mathcal H$, the locus corresponds to twisted cubics whose nets of quadrics always meet a fixed $\mathbb P^{6}$ of quadrics. We can take this $\mathbb P^{6}$ naturally by imposing $p_{1}, p_{2}$
and $p_{3}$ on quadrics. So after restricted to $\Lambda$, those nets of quadrics become nets of conics. At least one conic in each net comes from the fixed $\mathbb P^{6}$ of quadrics
containing $p_{1}, p_{2}, p_{3}$. This is just the divisor $G$ by its definition.
\end{remark}

\begin{remark}
As mentioned in the introduction section, the geometry of $\mathcal H(2)$ and the morphism $h: \mathcal H\rightarrow\mathcal H(2)$ have already been studied in \cite{EPS}. Actually $\mathcal H(2)$ is smooth
and $h$ is the blow up of $H(2)$ along the locus of the point-plane incidence correspondence $\{(p\in \Lambda)\}$. Moreover, the Betti numbers of $\mathcal H$ and $\mathcal H(2)$ are calculated in \cite{EPS}. In particular, the Picard number of $\mathcal H$ and $\mathcal H(2)$ is 2 and 1 respectively. This coincides with our result, since $h$ is a divisorial contraction which makes the Picard number drop by 1. 
\end{remark}

\begin{remark}
\label{MM}
The same argument as in the proof of Theorem \ref{main} (v) also applies to $\overline{\mathcal M}_{0,0}(\mathbb P^{r}, d).$ $T$ is nef and the associated divisorial contraction contracts all degree 1 tails. So the resulting space is the space of $(d-1)$-stable maps $\overline{\mathcal M}_{0,0}(\mathbb P^{r}, d, d-1)$ in \cite{MM} and \cite{Pa}. Reader should watch out that by a tail we mean a component of a rational curve whose removal does not disconnect 
the curve. For instance, the middle $\mathbb P^{1}$ of a length-3 chain of $\mathbb P^{1}$'s is a backbone rather than a tail. For reader's convenience, we include the definition for the space of $k$-stable maps $\overline{\mathcal M}_{0,0}(\mathbb P^{r}, d, k)$ as follows. One can refer to \cite[Def 1.2]{MM} and \cite[Cor 4.6]{Pa} for more details. \\

Let $k$ be a natural number, $0 \leq k \leq d$. Fix a rational number $\epsilon$ such that $0 < \epsilon < 1$. The moduli space of $k$-stable maps $\overline{\mathcal M}_{0,0}(\mathbb P^{r}, d, k)$ parameterizes 
the following data $(\pi: C\rightarrow S,\ \mu: C\rightarrow \mathbb P^{r},\ \mathfrak L,\ e)$ where: \\
(1) $\pi: C\rightarrow S$ is a flat family of rational nodal curves over the scheme $S$. \\
(2) $\mathfrak L$ is a line bundle on $C$ of degree $d$ on each fiber $C_{s}$ which, together with the morphism $e: \mathcal O_{C}^{r+1} \rightarrow \mathfrak L$ determines the rational map 
$\mu: C\rightarrow \mathbb P^{r}$. \\
(3) $\omega_{C/S}^{d-k+\epsilon}\otimes \mathfrak L$ is relatively ample over $S$.  \\
(4) $\mathfrak G :=$ coker $e$, restricted to each fiber $C_{s}$, is a skyscraper sheaf, and dim $\mathfrak G_{p} \leq d-k$ for any $p\in C_{s}$, where $\mathfrak G_{p}$ is the stalk of $\mathfrak G$ at $p$. 
If $0 < \mbox{dim}\ \mathfrak G_{p}$, then $p\in C_{s}$ is a smooth point of $C_{s}$. \\

Condition (3) rules out those tails which map with degree less or equal to $d-k$. Instead, we allow the appearance of base points by (4). There is a natural morphism    
$\overline{\mathcal M}_{0,0}(\mathbb P^{r}, d, k+1) \rightarrow \overline{\mathcal M}_{0,0}(\mathbb P^{r}, d, k)$ which contracts the tails of degree $d-k$, c.f. \cite[Prop 1.3]{MM}. For the contraction associated to the divisor $T$, we get $\overline{\mathcal M}_{0,0}(\mathbb P^{r}, d)\rightarrow \overline{\mathcal M}_{0,0}(\mathbb P^{r}, d, d-1)$, which replaces a tail of degree 1 by a simple base point at its attaching point. So only the boundary $\Delta_{1,d-1}$ is contracted. \\

A more general contraction contracting the total boundary of $\overline{\mathcal M}_{0,0}(\mathbb P^{r}, d)$ is considered in \cite{CHS1} without mentioning the modular interpretation for the resulting space, c.f. \cite[Thm 1.9]{CHS1}.   
\end{remark}

\section{$\overline{\mathcal M}_{0,0}(\mathbb P^{3}, 3)$ is a Mori dream space}
The chamber decomposition for the effective cone of $\overline{\mathcal M}_{0,0}(\mathbb P^{3}, 3)$ also holds for the effective cone of $\mathcal H$.
The nef cone of $\mathcal H$ is generated by $H$ and $F$. For an effective divisor $K$ on $\mathcal H$, we have a similar result about the base locus of $K$, compared with Theorem \ref{BL}.

\begin{theorem}
\label{BLH}
\ \\
(i). $K$ is base point free if it lies in the chamber bounded by $H$ and $F$; \\
(ii). B($K$) is the boundary $\Delta$ if $K$ lies in the chamber bounded by $T$ and $\Delta$; \\
(iii). B($K$) is the locus of the nonreduced curves $N$ if $K$ lies in the chamber bounded by $H$ and $T$; \\
(iv). B($K$) is the locus of the degenerate curves $D_{deg}$ if $K$ lies in the chamber bounded by $F$ and $D_{deg}$.
\end{theorem}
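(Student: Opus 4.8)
The plan is to reduce everything to Theorem \ref{BL} via the birational map $\phi$, and to track how the base loci transform under the flip and the identification of Picard groups. Since $\phi: \overline{\mathcal M}_{0,0}(\mathbb P^{3}, 3) \dashrightarrow \mathcal H$ is an isomorphism in codimension two and induces an isomorphism Pic$(\overline{\mathcal M}_{0,0}(\mathbb P^{3}, 3))\otimes\mathbb Q \cong$ Pic$(\mathcal H)\otimes\mathbb Q$, the chamber decomposition of the effective cone carries over verbatim, with the four rays $\Delta$, $T$, $H$, $F$, $D_{deg}$ occupying the same positions. The crucial difference is that the role of the nef cone has shifted: on $\overline{\mathcal M}_{0,0}(\mathbb P^{3}, 3)$ the nef cone is spanned by $H$ and $T$, whereas on $\mathcal H$ it is spanned by $H$ and $F$ (this was established in the proof of Theorem \ref{main}(iv), where $F$ was shown to be proportional to $h^{*}\mathcal O_{\mathcal H(2)}(1)$ and hence nef). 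This immediately gives part (i): any $K$ in the chamber bounded by $H$ and $F$ is nef, and since $H$ is base point free (it is pulled back from $\mathcal C$ via $g$) and $F = h^{*}\mathcal O_{\mathcal H(2)}(1)$ is base point free, every nonnegative combination is base point free.

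For part (ii), the chamber bounded by $T$ and $\Delta$ lies \emph{outside} the nef cone of $\mathcal H$ on the $\Delta$-side, exactly as on $\overline{\mathcal M}_{0,0}(\mathbb P^{3}, 3)$. I would reuse the test curve $B_{1}$ (the pencil of lines attached to a plane conic): since $\phi$ is an isomorphism away from the exceptional loci and $B_{1}$ is disjoint from $N$, its intersection numbers $H\cdot B_{1}=1$, $\Delta\cdot B_{1}=-1$ persist on $\mathcal H$, forcing $K\cdot B_{1}<0$ for $K$ strictly on the $\Delta$ side of $T$, so $\Delta\subset$ B$(K)$; removing $\Delta$ leaves the base-point-free class $T$, giving equality. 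Part (iii) is where the genuinely new phenomenon appears and is the main obstacle. On $\overline{\mathcal M}_{0,0}(\mathbb P^{3}, 3)$ the chamber bounded by $H$ and $F$ was base-point-free (it was part of the nef cone), but on $\mathcal H$ the ray $T$ now lies strictly inside this region, so the subchamber bounded by $H$ and $T$ has become non-nef, and its base locus should be the flipped locus $N = g^{-1}(N)$ replacing the old multi-image locus $M$. I would show B$(K)\subset N$ by exhibiting, for any reduced curve $[C]\notin N$, a line defining $H$ and a plane $\Pi$ defining $T$ that avoid $C$ appropriately, so $[C]$ is not forced into the base locus; for the reverse inclusion I must produce effective curve classes sweeping out an open subset of $N$ against which $K$ is negative. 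The natural candidates are the strict transforms under $\phi$ of the families $E_{1,2}$ and $E_{3}$ used in Theorem \ref{BL}(iii), now interpreted inside the fibers $g^{-1}([C])\cong\mathbb P^{2}$ and $g^{-1}(3[L_{3}])$ analyzed in the proof of Theorem \ref{main}(iii); I expect these to have $H\cdot R=0$ and $T\cdot R<0$, pinning the base locus to $N$.

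The delicate point, and the one I would spend the most care on, is verifying the sign of $T\cdot R$ on these flipped curve classes and confirming that $N$ (not some larger or smaller locus) is swept out. Because $T=\frac{2}{3}(H+\Delta)$ and the flip reverses which side of $F$ is nef, the intersection of $\Delta$ with curves contracted by $g$ changes sign relative to the situation on $\overline{\mathcal M}_{0,0}(\mathbb P^{3}, 3)$, and I must check this against the explicit fiber geometry ($\mathbb P^{2}$ in the $[L_1]+2[L_2]$ case, the contracted $\mathbb P(\mathcal O\oplus\mathcal O\oplus\mathcal O(-3))$ in the $3[L_3]$ case) rather than transporting the old numbers naively. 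Once the direction of the inequality is secured, the stable-base-locus argument proceeds as before: strip off $N$, reduce to the nef class $T$, and conclude equality. Finally, part (iv) is essentially identical to Theorem \ref{BL}(iv): the chamber bounded by $F$ and $D_{deg}$ is unchanged by the flip since it lies on the far side of $F$ from the nef cone on both spaces, so I would reuse the pencil $R$ of plane rational nodal cubics with common node $p$, for which $F\cdot R=0$ and $D_{deg}\cdot R<0$ (the computation in Theorem \ref{BL}(iv) is valid on $\mathcal H$, as already noted there, even though a spatial embedded point appears at $p$), forcing $D_{deg}\subset$ B$(K)$; removing it leaves $F$, whose base locus is the subordinate locus inside $D_{deg}$, completing the description.
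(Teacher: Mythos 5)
Your overall strategy --- transporting the test-curve computations of Theorem \ref{BL} across $\phi$ and supplying new negative curve classes inside the fibers of $g$ for part (iii) --- is exactly the ``same method'' the paper has in mind (it gives no written proof, only the remark that the method of Theorem \ref{BL} applies), and the core of your part (iii) plan is sound: on a fiber $g^{-1}([C])$ one has $H\cdot R=0$, and from $R\cdot D_{deg}=1$ and $D_{deg}=\frac{2}{3}(H-\frac{1}{2}\Delta)$ one gets $\Delta\cdot R=-3$, hence $T\cdot R=\frac{2}{3}(H+\Delta)\cdot R=-2<0$, and these fibers sweep out $N$. However, your description of the cone geometry in part (iii) is wrong: the rays are ordered $\Delta, T, H, F, D_{deg}$, so $T$ and $F$ lie on \emph{opposite} sides of $H$, and $T$ does not ``lie strictly inside'' the chamber bounded by $H$ and $F$; moreover that chamber was \emph{not} base point free on $\overline{\mathcal M}_{0,0}(\mathbb P^{3},3)$ --- its base locus was $M$ by Theorem \ref{BL}(iii). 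What actually happens is that the two chambers adjacent to $H$ swap roles across the flip: $[H,F]$ goes from base locus $M$ to nef, while the formerly nef $[H,T]$ acquires base locus $N$. Neither chamber contains the other.

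Two further supporting claims need repair. In part (ii) you finish by saying that after stripping off $\Delta$ ``the base-point-free class $T$'' remains; but on $\mathcal H$ the class $T$ is not even nef (the nef cone of $\mathcal H$ is spanned by $H$ and $F$), and by your own part (iii) its stable base locus is $N$. The conclusion $\mathrm{B}(K)=\Delta$ still holds, but only because $N$ is contained in the support of $\Delta$ (a general point of $N$ is a genus $-1$ double line meeting a line, a limit of unions of three lines, hence in the closure of the reducible locus); that containment must be stated, or the argument does not close. Similarly, at the end of part (iv) you say the base locus of $F$ is ``the subordinate locus inside $D_{deg}$'': that was the situation on $\overline{\mathcal M}_{0,0}(\mathbb P^{3},3)$, where $\mathrm{B}(F)=M$, but on $\mathcal H$ the class $F=h^{*}\mathcal O_{\mathcal H(2)}(1)$ is base point free, so after removing $D_{deg}$ nothing further is contributed and $\mathrm{B}(K)=D_{deg}$ exactly. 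With these corrections the proposal matches the intended proof.
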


This theorem can be verified easily by using the same method in the proof of Theorem \ref{BL}. We leave it to the reader. \\

Now it is clear that the flip from $\overline{\mathcal M}_{0,0}(\mathbb P^{3}, 3)$ to $\mathcal H$ replaces the locus of the multi-image maps $M$ by the locus of the nonreduced curves $N$. It also switches the ample cone bounded by $H$ and $T$ to the one bounded by $H$ and $F$. An interesting fact is that the flipping locus $M = M_{1,2}\cup M_{3}$ is reducible but $N$ is irreducible. \\

As a corollary, we have the following conclusion.
\begin{corollary}
$\overline{\mathcal M}_{0,0}(\mathbb P^{3}, 3)$ is a Mori dream space. Its Cox ring is finitely generated and Mori's program can be carried out for any divisor on $\overline{\mathcal M}_{0,0}(\mathbb P^{3}, 3)$, i.e. the necessary contractions and flips exist and any sequence terminates.  
\end{corollary}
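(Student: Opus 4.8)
The plan is to verify that $\overline{\mathcal M}_{0,0}(\mathbb P^{3}, 3)$ satisfies the defining conditions of a Mori dream space in the sense of Hu--Keel. Recall that a projective $\mathbb Q$-factorial variety $X$ is a Mori dream space if (a) $\mathrm{Pic}(X)\otimes\mathbb Q$ is finite-dimensional, (b) the effective cone $\overline{\mathrm{Eff}}(X)$ is a rational polyhedral cone generated by finitely many effective divisors, and (c) there is a finite chamber decomposition of the movable cone into rational polyhedral subcones, each of which is the nef cone of some birational model of $X$ obtained by a sequence of flips and divisorial contractions, with all these models being $\mathbb Q$-factorial projective. Equivalently, by \cite{HK}, it suffices to exhibit the finite chamber structure together with the corresponding birational models and to confirm that Mori's program (running the $K$-MMP for an arbitrary divisor) terminates with the predicted outputs.

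First I would assemble the ingredients already established in the body of the paper. By Theorem \ref{div}, $\mathrm{Pic}(\overline{\mathcal M}_{0,0}(\mathbb P^{3}, 3))\otimes\mathbb Q$ is generated by $H$ and $\Delta$, so the Picard number is $2$; the effective cone is the rational polyhedral cone spanned by $\Delta$ and $D_{deg}$; and all the intermediate rays ($T$, $F$) are rational. Thus conditions (a) and (b) hold immediately. For the chamber structure, Theorem \ref{main} identifies each chamber of the effective cone with an explicit projective model: the two-dimensional interior chambers correspond to $\overline{\mathcal M}_{0,0}(\mathbb P^{3}, 3)$ itself and to $\mathcal H$, while the walls $H$, $T$, $F$, $D_{deg}$ give the contractions to $\mathcal C$, $\overline{\mathcal M}_{0,0}(\mathbb P^{3}, 3, 2)$, $\mathcal H(2)$, and the flip $g$ of $f$. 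Crucially, each of these models is projective and $\mathbb Q$-factorial (indeed $\mathcal H$ and $\mathcal H(2)$ are smooth by \cite{PS} and \cite{EPS}), and each wall-crossing is realized as either a divisorial contraction or a small modification (flip) associated to the divisor $H+\alpha\Delta$. Because there are only finitely many chambers and finitely many rational walls, the movable cone admits the required finite rational polyhedral decomposition.

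Next I would argue termination and existence of the MMP steps for an arbitrary divisor, which is condition (c). Since the effective cone is two-dimensional, any $K$-MMP is simply a matter of moving along the one-parameter family $H+\alpha\Delta$ and crossing the finitely many walls in order; by Theorem \ref{main} and Theorem \ref{BLH}, each wall-crossing is one of the explicitly described contractions or flips, all of which exist and produce $\mathbb Q$-factorial projective models. After crossing the last relevant wall one reaches a Mori fiber space structure (the contraction to $\mathcal C$ on the $H$-side, or to $\mathcal H(2)$ / $\overline{\mathcal M}_{0,0}(\mathbb P^{3}, 3, 2)$ on the boundary walls), so every sequence terminates after at most the number of walls, which is finite. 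Finite generation of the Cox ring then follows from the Hu--Keel characterization: the finite polyhedral chamber decomposition of $\overline{\mathrm{Eff}}$, with each chamber the pullback of the nef cone of a $\mathbb Q$-factorial projective model, is precisely equivalent to finite generation of $\bigoplus_{L}H^{0}(X,L)$.

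The main obstacle, and the step deserving the most care, is confirming $\mathbb Q$-factoriality of \emph{all} intermediate models and that the small contraction $f:\overline{\mathcal M}_{0,0}(\mathbb P^{3}, 3)\to\mathcal C$ genuinely admits a flip rather than merely a birational modification. This is exactly where the detailed fiber analysis of Theorem \ref{main}(iii) is indispensable: one must know that $K=H+\alpha\Delta$ is $g$-ample and $-K$ is $f$-ample for $\alpha\in(-\tfrac15,0)$, so that $g:\mathcal H\to\mathcal C$ is the flip of $f$, and that $\mathcal H$ is $\mathbb Q$-factorial so the flipped variety lies in the same category. Once the flip is verified and the $\mathbb Q$-factoriality of $\mathcal C$ (away from the flipping locus, where it is an isomorphism) and of the boundary models is in hand, the remaining verification is the purely formal check that the chambers exhaust $\overline{\mathrm{Eff}}$ and that the predicted nef cones agree on overlaps, which the explicit divisor classes of Theorem \ref{div} make routine.
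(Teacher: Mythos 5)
Your proposal is correct and follows essentially the same route as the paper: both invoke the Hu--Keel characterization and observe that the movable cone is exactly the union of the nef cone of $\overline{\mathcal M}_{0,0}(\mathbb P^{3}, 3)$ (spanned by $H$ and $T$) and the pullback of the nef cone of $\mathcal H$ (spanned by $H$ and $F$), with the remaining chambers handled by stripping off the divisorial base components $\Delta$ and $D_{deg}$, all of which is supplied by Theorems \ref{div}, \ref{main}, \ref{BL} and \ref{BLH}. Your extra attention to $\mathbb Q$-factoriality and to the actual existence of the flip is sound but already discharged by Theorem \ref{main}(iii); the paper simply records the conclusion more tersely.
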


\begin{remark}
The same results hold for the Hilbert scheme $\mathcal H$. 
\end{remark}

The nef cone of $\overline{\mathcal M}_{0,0}(\mathbb P^{3}, 3)$ with the pull back of the nef cone from $\mathcal H$ together
consist of all the effective divisors without stable base components. Hence, $\overline{\mathcal M}_{0,0}(\mathbb P^{3}, 3)$ is a Mori dream space, c.f. \cite[1.10]{HK}.
The fact that its Cox ring is finitely generated is just a property of Mori dream spaces. Moreover, Theorem \ref{main} tells us how to run the Mori's program explicitly for an effective divisor on $\overline{\mathcal M}_{0,0}(\mathbb P^{3}, 3)$. \\

{\bf Question:} It would be interesting to understand the ring structure of the Cox ring for $\overline{\mathcal M}_{0,0}(\mathbb P^{3}, 3)$. 

\addcontentsline{toc}{section}{References}

Department of Mathematics, Harvard University, 1 Oxford Street, Cambridge, MA 02138 \par
{\it Email address:} dchen@math.harvard.edu

\end{document}